\DeclareMathOperator{\diff}{d}
\newtheorem{theorem}{Theorem}
\newtheorem{corollary}[theorem]{Corollary}
\newtheorem{definition}[theorem]{Definition}
\newtheorem{proposition}[theorem]{Proposition}
\def\MM#1{\boldsymbol{#1}}
\newcommand{\pp}[2]{\frac{\partial #1}{\partial #2}}
\newcommand{\revised}[1]{{#1}}
\newcommand{\jump}[1]{\left[\!\!\left[ #1 \right]\!\!\right]}
\journal{Journal of Computational Physics}
\begin{document}

\begin{frontmatter}



\title{Embedded discontinuous Galerkin transport schemes with
  localised limiters}


\author[math]{C.~J. Cotter}
\address[math]{Department of Mathematics, Imperial College London,
 South Kensington Campus, London SW7 2AZ}
\author[dmitri]{D. Kuzmin}
\address[dmitri]{Institute of Applied Mathematics, Dortmund University of Technology, Vogelpothsweg 87, D-44227 Dortmund, Germany}

\begin{abstract}
Motivated by finite element spaces used for representation of
temperature in the compatible finite element approach for numerical
weather prediction, we introduce locally bounded transport schemes for
(partially-)continuous finite element spaces. The underlying
high-order transport scheme is constructed by injecting the
partially-continuous field into an embedding discontinuous finite
element space, applying a stable upwind discontinuous Galerkin (DG)
scheme, and projecting back into the partially-continuous space; we
call this an embedded DG transport scheme. We prove that this scheme
is stable in $L^2$ provided that the underlying upwind DG scheme
is. We then provide a framework for applying limiters for embedded DG
transport schemes. Standard DG limiters are applied during the
underlying DG scheme. We introduce a new localised form of
element-based flux-correction which we apply to limiting the
projection back into the partially-continuous space, so that the whole
transport scheme is bounded. We provide details in the specific case
of tensor-product finite element spaces on wedge elements that are
discontinuous P1/Q1 in the horizontal and continuous \revised{P2} in the
\revised{vertical}. The framework is illustrated with numerical tests.
\end{abstract}

\begin{keyword}
Discontinuous Galerkin \sep slope limiters \sep flux corrected transport
\sep convection-dominated transport \sep numerical weather prediction
\end{keyword}

\end{frontmatter}

\section{Introduction}

\revised{Recently there has been a lot of activity in the development
  of finite element methods for numerical weather prediction (NWP),
  using continuous (mainly spectral) finite elements as well as
  discontinuous finite elements
  \citep{fournier2004spectral,thomas2005ncar,dennis2011cam,kelly2012continuous,kelly2013implicit,marras2013simulations,brdar2013comparison,bao2015horizontally};
  see \citet{marras2015review} for a comprehensive review. A key
aspect of NWP models is the need for transport schemes that preserve
discrete analogues of properties of the transport equation such as
monotonicity (shape preservation) and positivity; these properties are
particularly important when treating tracers such as
moisture. Discontinuous Galerkin methods can be interpreted as a
generalisation of finite volume methods and hence the roadmap for the
development of shape preserving and positivity preserving methods is
relatively clear (see \citet{cockburn2001runge} for an introduction to
this topic). However, this is not the case for continuous Galerkin
methods, and so different approaches must be used. In the NWP
community, limiters for CG methods have been considered by
\citet{marras2012variational}, who used first-order subcells to reduce
the method to first-order upwind in oscillatory regions, and
\citet{guba2014optimization}, who exploited the monotonicity of the
element-averaged scheme in the spectral element method to build a
quasi-monotone limiter. }

\revised{ In this paper, we address the problem of finding suitable
  limiters for the partially continuous finite element spaces for
  tracers that arise in the framework of compatible finite element
  methods for numerical weather prediction models
  \citep{cotter2012mixed,cotter2014finite,staniforth2013analysis,mcrae2014energy}. Compatible
  finite element methods have been proposed as an evolution of the
  C-grid staggered finite difference methods that are very popular in
  NWP.}  \revised{Within the UK dynamical core ``Gung-Ho'' project},
this \revised{evolution} is being driven by the need to move away from
the latitude-longitude grids which are currently used in NWP models,
since they prohibit parallel scalability
\revised{\citep{staniforth2012horizontal}}. \revised{Compatible finite
  element} methods rely on choosing compatible finite element spaces
for the various prognostic fields (velocity, density, temperature,
etc.), in order to avoid spurious numerical wave propagation that
pollutes the numerical solution on long time scales.  In particular,
in three dimensional models, this calls for the velocity space to be a
div-conforming space such as Raviart-Thomas, and the density space is
the corresponding discontinuous space. Many current operational
forecasting models, such as the Met Office Unified Model
\citep{davies2005new}, use a Charney-Philips grid staggering in the
vertical, to avoid a spurious mode in the vertical. When translated
into the framework of compatible finite element spaces, this requires
the temperature space to be a tensor product of discontinuous
functions in the horizontal and continuous functions in the vertical
(more details are given below). Physics/dynamics coupling then
requires that other tracers (moisture, chemical species \emph{etc.})
also use the same finite element space as temperature.

A critical requirement for numerical weather prediction models is that
the transport schemes for advected tracers do not lead to the creation
of new local maxima and minima, since their coupling back into the
dynamics is very sensitive. In the compatible finite element
framework, this calls for the development of limiters for
partially-continuous finite element spaces. Since there is a
well-developed framework for limiters for discontinuous Galerkin
methods
\revised{\citep{biswas94parallel,burbeau01:problem,cockburn2001runge,hoteit04:newgalerkin,krivodonova04:shockdg,tu05:slope,kuzmin2010vertex,zhang2011maximum}},
in this paper we pursue the three stage approach of (i) injecting the
solution into an embedding discontinuous finite element space at the
beginning of the timestep, then (ii) applying a standard discontinuous
Galerkin timestepping scheme, before finally (iii) projecting the
solution back into the partially continuous space. If the
discontinuous Galerkin scheme is combined with a slope limiter, the
only step where overshoots and undershoots can occur is in the final
projection. In this paper we describe a localised limiter for the
projection stage, which is a modification of element-based limiters
\citep{lohner1987finite,fctools} previously applied to remapping
in \citet{Lohner2008,kuzmin2010failsafe}. This leads to a locally bounded
advection scheme when combined with the other steps.

\revised{
The main results of this paper are:
\begin{enumerate}
\item The introduction of an embedded discontinuous Galerkin scheme which
  is demonstrated to be linearly stable,
\item The introduction of localised element-based limiters to remove
  spurious oscillations when projecting from from discontinuous to
  continuous finite element spaces, which are necessary to make the
  whole transport scheme bounded,
\item When combined with standard limiters for the discontinuous
  Galerkin stage, the overall scheme remains locally bounded,
  addressing the previously unsolved problem of how to limit partially
  continuous finite element spaces that arise in the compatible finite
  element framework.
\end{enumerate}
 Our bounded transport scheme can also be used for continuous finite
 element methods, although other approaches are available that do not
 involve intermediate use of discontinuous Galerkin methods.}

The rest of the paper is structured as follows. The problem is
formulated in Section \ref{sec:formulation}. In particular, more
detail on the finite element spaces is provided in Section
\ref{sec:spaces}. The embedded discontinuous Galerkin schemes are
introduced in Section \ref{sec:embedded}; it is also shown that these
schemes are stable if the underlying discontinuous Galerkin scheme is
stable. The limiters are described in Section \ref{sec:bounded}.
In Section \ref{sec:numerics} we provide some numerical
examples. Finally, in Section \ref{sec:outlook} we provide a summary
and outlook.

\section{Formulation}
\label{sec:formulation}

\subsection{Finite element spaces}
\label{sec:spaces}
We begin by defining the partially continuous finite element spaces
under consideration. In three dimensions, the element domain is
constructed as the tensor product of a two-dimensional horizontal
element domain (a triangle or a quadrilateral) and a one-dimensional
vertical element domain (i.e., an interval); we obtain triangular
prism or hexahedral element domains aligned with the vertical
direction. For a vertical slice geometry in two dimensions (frequently
used in testcases during model development), the horizontal domain is
also an interval, and we obtain quadrilateral elements aligned with
the vertical direction.

To motivate the problem of transport schemes for a partially
continuous finite element space, we first consider a compatible finite
element scheme that uses a discontinuous finite element space for
density.  This is typically formed as the tensor product of the $DG_k$
space in the horizontal (degree $k$ polynomials on triangles or bi-$k$
polynomials on quadrilaterals, allowing discontinuities between
elements) and the $DG_l$ space in the vertical. We consider the case
where the same degree is chosen in horizontal and vertical,
i.e. $k=l$, although there are no restrictions in the framework. We
will denote this space as $DG_k\times DG_k$.

In the compatible finite element framework, the vertical velocity
space is staggered in the vertical from the pressure space; the
staggering is selected by requiring that the divergence (i.e., the
vertical derivative of the vertical velocity) maps from the vertical
velocity space to the pressure space. This means that vertical
velocity is stored as a field in $DG_k\times CG_{k+1}$ (where
$CG_{k+1}$ denotes degree $k+1$ polynomials in each interval element,
with $C^0$ continuity between elements). To avoid spurious hydrostatic
pressure modes, one may then choose to store (potential) temperature
in the same space as vertical velocity (this is the finite element
version of the Charney-Phillips staggering).  \revised{Figure
  \ref{fig:com-fem} provides diagrams showing the nodes for these
  spaces in the case $k=1$.}  Details of how to automate the
construction of these finite element spaces within a code generation
framework are provided in \citet{mcrae2014automated}.

\begin{figure}
  \centerline{\includegraphics[width=15cm]{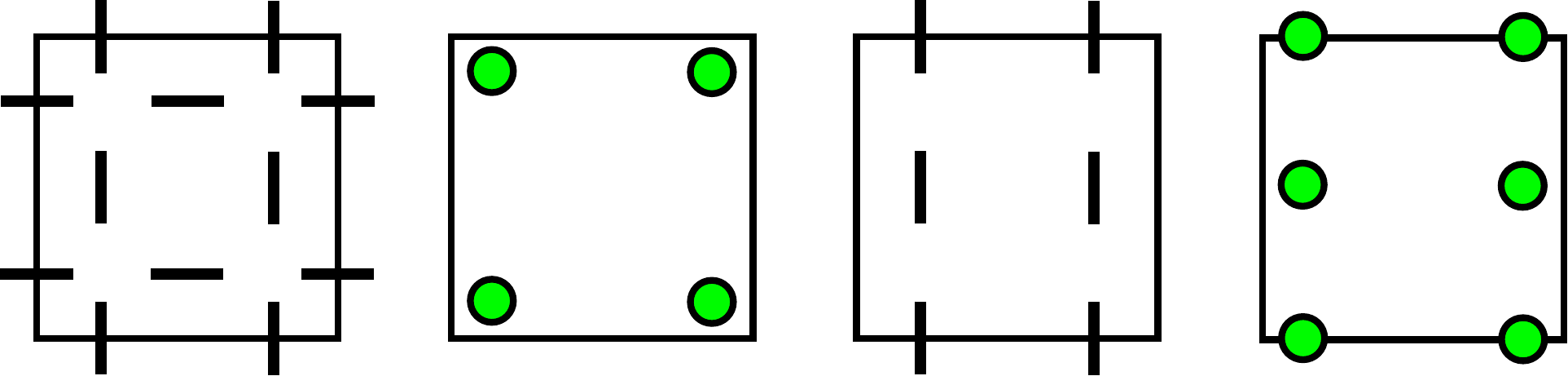}}
  \revised{\caption{\label{fig:com-fem}Diagrams showing the nodes for the
    various spaces in the compatible finite element framework in the
    2D vertical slice case and degree $k=1$. From left to right: the
    velocity space $RT_1$, the pressure space $DG_1\times DG_1$, the
    vertical part of the velocity space $RT_1$, and the temperature
    space $DG_1\times CG_2$. Circles denote scalar nodes, whilst lines
    denote normal components of a vector.}}
\end{figure}

Monotonic transport schemes for temperature are often required,
particularly in challenging testcases such as baroclinic front
generation. Further, dynamics-physics coupling requires that other
tracers such as moisture must be stored at the same points as
temperature; many of these tracers are involved in parameterisation
calculations that involve switches and monotonic advection is required
to avoid spurious formation of rain patterns at the gridscale, for
example. Hence, we must address the challenge of monotonic advection
in the partially continuous $DG_k\times CG_{k+1}$ space.

In this paper, we shall concentrate on the case of $DG_1\times CG_2$.
This is motivated by the fact that we wish to use standard DG upwind
schemes where the advected tracer is simply evaluated on the upwind
side; the lowest order space $DG_0\times CG_1$ leads to a first order
scheme in this case. We may return to higher order spaces in future
work.

\subsection{Embedded Discontinuous Galerkin schemes}
\label{sec:embedded}
\begin{figure}
  \centerline{\includegraphics[width=15cm]{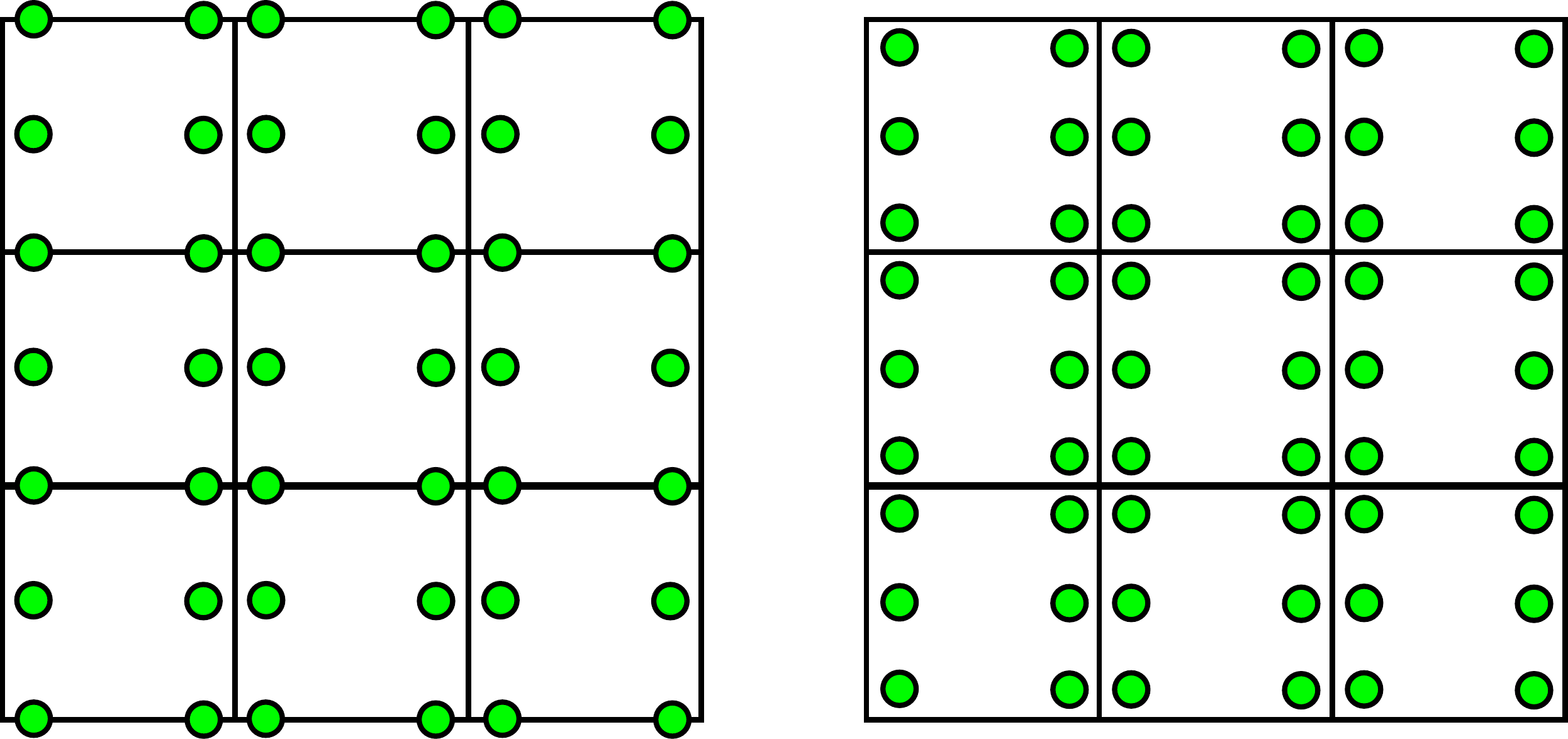}}
\revised{  \caption{\label{fig:vtheta-vthetadg}Diagrams showing the nodes for
    the partially continuous space $V$ and the discontinuous space $\hat{V}$,
    in the case $V=DG_1\times CG_2$ and $\hat{V}=DG_1 \times DG_2$.}}
\end{figure}

\revised{In this section we describe the basic embedded transport
  scheme as a linear transport scheme without limiters.  The scheme,
  which can be applied to continuous or partially-continuous finite
  element spaces, is motivated by the fact that limiters are most
  easily applied to fully discontinuous finite element spaces. We call
  the continuous or partially-continuous finite element space $V$, and
  let $\hat{V}$ be the smallest fully discontinuous finite element
  space containing $V$. A diagram illustrating $V$ and $\hat{V}$ in
  our case of interest, namely the finite element space for
  temperature described in the previous section, is shown in Figure
  \ref{fig:vtheta-vthetadg}.

Before describing the transport scheme, we make a few definitions.
\begin{definition}[Injection operator]
  For $u\in V\subset \hat{V}$, we denote $I:V\to \hat{V}$
  the natural injection operator.
\end{definition}
The injection operator does nothing mathematically except to identify
$Iu$ as a member of $\hat{V}$ instead of just $V$. However, in a
computer implementation, it requires us to expand $u$ in a new basis.
This can be cheaply evaluated element-by-element.
\begin{definition}[Propagation operator]
Let $A:\hat{V}\to\hat{V}$ denote the operator representing the
application of one timestep of an $L^2$-stable discontinuous Galerkin
discretisation of the transport equation.
\end{definition}
For example, $A$ could be the combination of an upwind discontinuous
Galerkin method with a suitable Runge-Kutta scheme.
\begin{definition}[Projection operator]
  For $\hat{u}\in \hat{V}$ we define the projection $P:\hat{V}\to V$ by
  \[
  \langle v,P\hat{u}\rangle = \langle v,\hat{u}\rangle, \quad \forall
  v\in V.
  \]
\end{definition}
In a computer implementation, this requires the inversion of the mass matrix
associated with $V$.

We now combine these operators to construct our embedded discontinuous Galerkin
scheme.
\begin{definition}[Embedded discontinuous Galerkin scheme]
  Let $V\subset\hat{V}$, with injection operator $I$, projection operator $P$
  and propagation operator $A$. Then one step of the embedded discontinuous
  Galerkin scheme is defined as
  \[
  \theta^{n+1} = PAI\theta^n, \quad \theta^n,\theta^{n+1}\in V.
  \]
\end{definition}
The $L^2$ stability of this scheme is ensured by the following 
result.
\begin{proposition}
Let $\alpha>0$ be the stability constant of the 
the propagation operator $A$, such that
\begin{equation}
\|A\| = \sup_{\hat{z}\in \hat{V},\|\hat{z}\|>0}\frac{\|A\hat{z}\|}{\|\hat{z}\|} \leq \alpha,
\end{equation}
where $\|\cdot\|$ denotes
the $L^2$ norm. Then, the stability constant $\alpha^*$ of the
embedded discontinuous Galerkin scheme on $V$ satisfies
$\alpha^*<\alpha$.
\end{proposition}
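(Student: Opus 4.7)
The plan is to bound $\alpha^{*}=\|PAI\|$ by submultiplicativity of the $L^{2}$ operator norm,
\[
\|PAI\|\;\le\;\|P\|\,\|A\|\,\|I\|,
\]
and to estimate each factor separately. The middle factor is given by hypothesis: $\|A\|\le\alpha$. So the whole argument reduces to showing that both the injection and the projection are non-expansive in $L^{2}$.

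First I would dispose of the injection. Because $V\subset\hat V$ and both spaces carry the restriction of the same $L^{2}$ inner product from the underlying domain, the function $I\theta$ is pointwise identical to $\theta$, hence $\|I\theta\|=\|\theta\|$ for every $\theta\in V$ and $\|I\|=1$. This is essentially definitional, and it uses crucially that we measure everything in the $L^{2}$ norm; for other norms, or for an interpolant rather than an injection, a mesh-dependent constant would typically appear here.

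Next I would handle the projection. Substituting $v=P\hat u\in V$ into the defining identity $\langle v,P\hat u\rangle=\langle v,\hat u\rangle$ yields $\|P\hat u\|^{2}=\langle P\hat u,\hat u\rangle$, and Cauchy--Schwarz immediately gives $\|P\hat u\|\le\|\hat u\|$. Hence $\|P\|\le 1$, and chaining the three bounds produces $\alpha^{*}\le\alpha$.

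The main obstacle I foresee is the strictness claimed in the statement. The submultiplicativity argument only delivers $\alpha^{*}\le\alpha$, and equality in the Cauchy--Schwarz step above occurs precisely when $\hat u=AI\theta$ already lies in $V$. To upgrade to $\alpha^{*}<\alpha$ one would need to exclude the existence of a maximiser $\theta\in V$ (which exists by compactness of the unit sphere in finite dimensions) for which simultaneously $AI\theta\in V$ and $\|AI\theta\|=\alpha\|\theta\|$. I would verify whether the hypotheses on $A$ — in particular that its norm bound is taken on the whole of $\hat V$ rather than merely on $IV$, and that $V$ is a proper subspace not invariant under $A$ in the extremal direction — really force this strictness; otherwise the natural reading of the conclusion is the non-strict bound $\alpha^{*}\le\alpha$.
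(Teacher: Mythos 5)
Your proof is correct and essentially identical to the paper's: the paper likewise chains $\|Iz\|=\|z\|$, enlargement of the supremum from $IV$ to $\hat V$, $\|P\hat z\|\le\|\hat z\|$ (which it attributes to the Riesz representation theorem, i.e.\ the same substitute-$v=P\hat u$-and-Cauchy--Schwarz fact you use), and $\|A\|\le\alpha$. Your concern about strictness is well founded: the paper's own proof also only establishes $\alpha^*\le\alpha$, so the strict inequality in the stated conclusion is not actually proved and should be read as $\le$.
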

\begin{proof}
\begin{equation}
\sup_{z\in V,\|z\|>0}\frac{\|PAIz\|}{\|z\|} =
\sup_{z\in V,\|z\|>0}
\frac{\|PAIz\|}{\|Iz\|}
\leq \sup_{\hat{z}\in \hat{V},\|\hat{z}\|>0}
\frac{\|PA\hat{z}\|}{\|\hat{z}\|}
\leq \sup_{\hat{z}\in \hat{V},\|\hat{z}\|>0}\frac{\|A \hat{z}\|}{\|\hat{z}\|} \leq \alpha,
\end{equation}
as required. In the last inequality we used the fact that $\|P\hat{z}\|
\leq \|\hat{z}\|$, which is a consequence of the Riesz representation
theorem.
\end{proof}
\begin{corollary}
  For a given velocity field $\MM{u}$, let $A_{\Delta t}$ denote the
  propagation operator for timestep size $\Delta t$. Let $\Delta t^*$ denote
  the critical timestep for $A_{\Delta t}$, \emph{i.e.},
  \[
  \|A_{\Delta t}\|\leq1, \quad \mbox{ for } \Delta t \leq \Delta t^*.
  \]
  Then, the critical timestep size $\Delta t^\dagger$ for the embedded
  discontinuous Galerkin scheme $PA_{\Delta t}I$ is at least as large
  as $\Delta t^*$.
\end{corollary}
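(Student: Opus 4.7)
The plan is to invoke the preceding proposition pointwise in $\Delta t$. First I would fix an arbitrary $\Delta t \in (0,\Delta t^*]$. By the definition of $\Delta t^*$, the propagation operator $A_{\Delta t}$ satisfies $\|A_{\Delta t}\|\leq 1$, so it fulfills the hypothesis of the proposition with the particular stability constant $\alpha = 1$. The conclusion of the proposition then supplies
\[
\sup_{z\in V,\|z\|>0}\frac{\|PA_{\Delta t}Iz\|}{\|z\|} \leq 1,
\]
so that a single step of the embedded scheme $PA_{\Delta t}I$ is also non-expansive in the $L^2$ norm at this timestep.

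Since $\Delta t$ was chosen arbitrarily in $(0,\Delta t^*]$, the set of timesteps for which the embedded scheme has operator norm bounded by one contains the entire interval $[0,\Delta t^*]$. By the definition of $\Delta t^\dagger$ as the supremum of such timesteps, this immediately yields $\Delta t^\dagger \geq \Delta t^*$, which is exactly the claim. There is really no obstacle to overcome: the corollary is a direct specialisation of the proposition with $\alpha=1$, and the decisive observation is simply that the bound $\|PAI\| \leq \|A\|$ coming from the proposition is monotone in $\|A\|$, so the contraction property of the underlying discontinuous Galerkin scheme transfers to the composite scheme without any additional CFL-type penalty.
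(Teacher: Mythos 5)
Your argument is correct and is essentially identical to the paper's proof, which likewise applies the bound $\|PA_{\Delta t}I\|\leq\|A_{\Delta t}\|\leq 1$ from the preceding proposition for each $\Delta t\leq\Delta t^*$. No issues.
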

\begin{proof}
  If $\Delta t\leq\Delta t^*$, then
  \[
  \|PA_{\Delta t}I\|\leq \|A_{\Delta t}\| \leq 1,
  \]
  as required.
\end{proof}
Hence, the embedded DG scheme is $L^2$ stable whenever the propagation
operator $A$ is. 

For the numerical examples in this paper, we consider the case
$V=DG_1\times CG_2$ (our temperature space) and $\hat{V}=DG_1\times
DG_2$.  For a given divergence-free velocity field $\MM{u}$, defined
on the domain $\Omega$ and satisfying $\MM{u}\cdot\MM{n}=0$ on the
domain boundary $\partial \Omega$, $A$ represents the application of
one timestep applied to the transport equation
\begin{equation}
  \theta_t = -\MM{u}\cdot\nabla\theta = -\nabla\cdot(\MM{u}\theta),
\end{equation}
discretised using the usual Runge-Kutta discontinuous Galerkin
discretisation (see \citet{cockburn2001runge} for a review). To do
this, first we define $L:\hat{V}\to\hat{V}$ by
\begin{equation}
  \int_\Omega \gamma L\theta \diff x =
  -\Delta t  \int_{\Omega} \nabla\gamma\cdot\MM{u} \theta \diff x
  +\Delta t \int_{\Gamma} \jump{\MM{u}\gamma}\tilde{\theta}\diff S,
\end{equation}
where $\Gamma$ is the set of interior facets in the finite element
mesh, with the two sides of each facet arbitrarily labelled by $+$ and
$-$, the jump operator denotes $\jump{\MM{v}}=\MM{v}^+\cdot\MM{n}^++\MM{v}^-\cdot\MM{n}^-$, and where $\tilde{\theta}$ is the upwind value of $\theta$
defined by 
\[
\tilde{\theta} = \left\{
\begin{array}{cc}
\theta^+ & \mbox{ if } \MM{u}\cdot\MM{n}^+<0, \\
\theta^- & \mbox{ otherwise.}
\end{array}
\right.
\]
Then, the timestepping method is defined by the usual 
3rd order 3 step SSPRK timestepping method
\citep{shu1988efficient},
\begin{align}
  \phi^1 &= \theta^n + L\theta^n, \\
  \phi^2 &= \frac{3}{4}\theta^n + \frac{1}{4}(\phi^1 + L\phi^1), \\
  A\theta^n = \theta^{n+1} &= \frac{1}{3}\theta^n + \frac{2}{3}
  (\phi^2 + L\phi^2).
\end{align}
Since the finite element space $V$
is discontinuous in the horizontal, the projection $P:\hat{V}\to V$
decouples into independent problems to solve in each column
(\emph{i.e.}, the mass matrix for $DG_1\times CG_2$ is column-block
diagonal).
}

\subsection{Bounded transport}
\label{sec:bounded}
Next we wish to add limiters to the scheme. This is done in two
stages. \revised{ First, a slope limiter should be incorporated into the
$\hat{V}$ propagator, $A$; we call the resulting scheme $\tilde{A}$.
A suitable limiter is defined in Section \ref{sec:dg limiter}
 After replacing $A$ with $\tilde{A}$, the only way that the solution
 can generate overshoots and undershoots is after the application of the
 projection $P$.} To control these unwanted
oscillations, we apply a (conservative) flux correction to the 
projection, referred to as flux corrected remapping
\citep{kuzmin2010failsafe}\revised{; this is described in Section \ref{sec:fcr}.
  We denote the flux corrected remapping $\tilde{P}$, and the resulting
  bounded transport scheme may be written as $\theta^{n+1}=\tilde{P}\tilde{A}I$.
}

\subsubsection{Slope limiter \revised{for the propagator $A$}}
\label{sec:dg limiter}
\revised{In principle, any suitable discontinuous Galerkin slope
  limiter can be used in the propagator $A$.}  In this paper we used
the vertex-based slope limiter of \citet{kuzmin2010vertex}. This
limiter is both very easy to implement, and supports a treatment of
the quadratic structure in the vertical.  \revised{Before presenting
  the limiter for $\hat{V}=DG_1 \times DG_2$ (recall that this is the
  space we must use to obtain a transport scheme for our $DG_1\times
  CG_2$ space used for temperature), we first review the concepts in
  the simpler case of $\hat{V}=DG_1\times DG_1$.}  The basic idea for
\revised{$\theta\in\hat{V}=DG_1\times DG_1$} is to write
\begin{equation}
\theta = \bar{\theta} + \Delta \theta,
\end{equation}
where $\bar{\theta}$ is the projection of $\theta$ into $DG_0$,
\emph{i.e.}  in each element $\bar{\theta}$ is the element-averaged
value of $\theta$. Then, for each vertex $i$ in the mesh, we compute
maximum and minimum bounds $\theta_{\max,i}$ and $\theta_{\min,i}$ by
computing the maximum and minimum values of $\bar{\theta}$ over all
the elements that contain that vertex, respectively.  In each element
$e$ we then compute a constant $0\leq \alpha_e\leq 1$ such that the
value of
\begin{equation}
\theta_{\min,i} \leq \theta_e(\MM{x}_i) = \bar{\theta}_e + \alpha_e(\Delta \theta)_e(\MM{x}_i) \leq \theta_{\max,i},
\end{equation}
at each vertex $i$ contained by element $e$. {The optimal value of the correction
factor $\alpha_e$ can be determined using the formula of \citet{barthjesp1989}
\begin{equation}
\alpha_e=\min\limits_{i\in {\cal N}_e}\left\{\begin{array}{ll}
\min\left\{1,\frac{\theta_{\max,i}-\bar \theta_e}{\theta_{e,i}-\bar \theta_e}\right\} &
\mbox{if}\quad \theta_{e,i}-\bar \theta_e>0,\\[0.1cm]
1 &\mbox{if} \quad \theta_{e,i}-\bar \theta_e=0,\\[0.0cm]
\min\left\{1,\frac{\theta_{\min,i}-\bar \theta_e}{\theta_{e,i}-\bar\theta_e}\right\} &
\mbox{if}\quad \theta_{e,i}-\bar \theta_e<0,
\end{array}\right.\label{dglim}
\end{equation}
where ${\cal N}_e$ is the set of vertices of element $e$ and $\theta_{e,i}=
\bar\theta_e+(\Delta\theta)_e(\MM{x}_i)$ is
the unconstrained value of the $DG_1$ shape function at the $i$-th vertex.}

\revised{For our temperature space $DG_1 \times DG_2$ applied to
  numerical weather prediction applications, we assume that we have a
  columnar mesh. This means that the prismatic elements are stacked
  vertically in layers, with vertical sidewalls (but possibly with
  tilted top and bottom faces to facilitate terrain-following meshes,
  so that the elements are trapezia). This allows us to adopt a Taylor
  basis in the vertical, \emph{i.e.} the basis in local coordinates is
  the tensor product of a Taylor basis in the vertical with a Lagrange
  basis in the horizontal.  We write}
\begin{equation}
\label{eq:2d-decomp}
\theta = {\bar{\theta}} + 
(\theta_1 - \bar{\theta}) + (\theta-\theta_1),
\end{equation}
\revised{
  where $\theta_1\in DG_1\times DG_1$, and satisfies the following conditions:
  \begin{enumerate}
  \item $\bar{\theta}_1=\bar{\theta}$,
  \item $\pp{\theta_1}{z}$ and $\pp{\theta}{z}$ take the same values
    along the horizontal element midline in local coordinates.
  \end{enumerate}
}\noindent Then, $\pp{\theta}{z} \in DG_1\times
DG_1$ whilst $\pp{\theta_1}{z} \in DG_1 \times DG_0$. 

First, we limit the quadratic component in the vertical (the third term in
Equation \eqref{eq:2d-decomp}), performing the following steps.
\begin{enumerate}
\item In each element, compute $\pp{\theta_1}{z}$, and
  \revised{evaluate the derivative at the horizontal cell midline to
    obtain $\overline{\pp{\theta_1}{z}} \in DG_1 \times DG_0$.}  If
  the quadratic component $\theta-\theta_1$ is limited to zero then
  $\pp{\theta_1}{z}$ will become equal to
  $\overline{\pp{\theta_1}{z}}$.
\item In each column, at each vertex $i$, compute bounds
  $\pp{\theta}{z}|_{\min,i}$ and $\pp{\theta}{z}|_{\max,i}$ by taking
  the maximum value of $\overline{\pp{\theta_1}{z}}$ at that vertex in the
  elements \revised{sharing that vertex} in the column.
\item In each element, compute element correction factors $\alpha_{1,e}$ 
  according to
\begin{equation}
\alpha_{1,e}=\min\limits_{i\in {\cal N}_e}\left\{\begin{array}{ll}
\min\left\{1,\frac{\pp{\theta}{z}|_{\max,i}-\overline{\pp{\theta}{z}}|_{e,i}}
{\pp{\theta}{z}_{e,i}-\overline{\pp{\theta}{z}}|_{e,i}}\right\} &
\mbox{if}\quad \pp{\theta}{z}|_{e,i}-\overline{\pp{\theta}{z}}|_{e,i}>0,\\[0.1cm]
1 &\mbox{if} \quad \revised{\pp{\theta}{z}|_{e,i}}-\overline{\pp{\theta}{z}}|_{e,i}=0,\\[0.0cm]
\min\left\{1,\frac{\pp{\theta}{z}|_{\min,i}-\overline{\pp{\theta}{z}}|_{e,i}}
{\pp{\theta}{z}|_{e,i}-\overline{\pp{\theta}{z}}|_{e,i}}\right\} &
\mbox{if}\quad \pp{\theta}{z}|_{e,i}-\overline{\pp{\theta}{z}}|_e<0.
\end{array}\right.
\end{equation}
\end{enumerate}
This approach can also be extended to meshes in spherical geometry for
which all side walls are parallel to the radial
direction\footnote{Such meshes arise when terrain following grids are
  used in spherical geometry.}, having replaced $\pp{}{z}$ by the
radial derivative.

Second, we apply the vertex-based limiter to the {$DG_1\times DG_1$}
component $\theta_1$, obtaining limiting constants $\alpha_0$. We then
finally evaluate
\begin{equation}
\theta \mapsto \theta = \bar{\theta} + \alpha_0(\theta_1 - \bar{\theta}) + \alpha_1(\theta-\theta_1).
\end{equation}
To reduce diffusion of smooth extrema, it was recommended in
\citet{kuzmin2010vertex} to recompute the $\alpha_0$ coefficients
according to
\begin{equation}
\alpha_{0,e} \mapsto \max(\alpha_{0,e},\alpha_{1,e}).
\end{equation}
However, this does not work in the case of $DG_1 \times DG_2$ since
there is no quadratic component in the horizontal direction, and hence
nonsmooth extrema in the horizontal direction will not be detected.
{A possible remedy is to use $\alpha_{0,e}$ for the horizontal
  gradient and $\max(\alpha_{0,e},\alpha_{1,e})$ for the vertical
  gradient or to limit the directional derivatives separately using an
  anisotropic version of the vertex-based slope limiter
  (\citet{dg_anisotropic}).}

\revised{This limiter is applied to the input to $\tilde{A}$ and after
  each SSPRK stage, to ensure that no new maxima or minima appear in
  the solution over the timestep.}

\subsubsection{Flux corrected remapping}
\label{sec:fcr}
\revised{The final step of the embedded DG scheme is the projection
  $P$ of the DG solution (which we denote here as $\hat{\theta}$) back
  into $V$. We obtain a high-order, but oscillatory solution, which we
  denote $\theta^H$.  To obtain a bounded solution, we introduce a
localised element-based limiter that blends $\theta^H$ with a
low-order bounded solution ${\theta}^L$, such that high-order
approximation is preserved wherever overshoots and undershoots are not
present.}

\revised{First, we must obtain the low-order bounded solution. Using
  the Taylor basis, we remove the quadratic part of $\hat{\theta}$,
  to obtain $\tilde{\theta}\in DG_1\times DG_1$. A low-order bounded solution can then be obtained 
by applying a lumped mass projection,}
\begin{equation}
\revised{M_i \theta_i^L = \int_\Omega \phi_i\tilde{\theta}\diff x
= \sum_{k=1}^m Q_{ik}\tilde{\theta}_k, \quad i=1,\ldots,m, }
\label{l2lumped}
\end{equation}
where the lumped mass $M$ is defined by 
\begin{equation}
M_i = \int_\Omega \phi_i \diff x,
\end{equation}
the projection matrix $Q$ is defined by
\revised{
\begin{equation}
Q_{ik} = \int_\Omega \phi_i\psi_k\diff x, \quad
i=1,\ldots,n,\, k=1,\ldots,\revised{m,}
\end{equation}}
$\{\psi_i\}_{i=1}^\revised{m}$ is a \revised{Lagrange} basis for
$DG_1\times DG_1$ \revised{and $\{\phi_i\}_{i=1}^n$ is a
  \revised{Lagrange} basis for $DG_1\times CG_2$. }

The lumped mass $M$ and projection matrix $Q$ both have strictly
positive entries. This means that for each $1\leq i \leq n$, the basis
coefficient $\theta^L_i$ is a weighted average of values of
\revised{$\tilde{\theta}$} coming from elements that lie in $S(i)$, the support of
$\phi_i$. The weights are all positive, and hence the value of
$\theta^L_i$ is bounded by the maximum and minimum values of
\revised{$\tilde{\theta}$} in $S(i)$. Hence, no new maxima or minima appear in the
low order solution.

\revised{Next, we combine the low order and high order solutions
  element-by-element, in a process called element-based flux
  correction. Element based flux correction was introduced in \citep{lohner1987finite} and formulated for conservative remapping in
  {\citep{Lohner2008,kuzmin2010failsafe}}. Here, we use a new localised
  element-based formulation, where element contributions to the low and
  high order solutions are blended locally and then assembled.}

\revised{To formulate the element-based limiter, we note that the} {consistent mass counterpart of (\ref{l2lumped}) is given by
\begin{equation}
\sum_{j=1}^nM_{ij}\theta_j^H = \int_\Omega \phi_i\hat{\theta}\diff x, \quad i=1,\ldots,n, 
\label{l2consistent}
\end{equation}
where 
\begin{equation}
M_{ij} = \int_\Omega \phi_i\phi_j \diff x.
\end{equation}
}

First, by repeated addition and subtraction of terms, we write (with no implied sum over the
index $i$)
\revised{\begin{equation}
    M_i\theta^H_i = M_i\theta^L_i + f_i
  \end{equation}
  where
  \begin{align}
    f_i&=M_i\theta^H_i-\sum_jM_{ij}\theta^H_j 
    + M_i\theta^L_i + \sum_jM_{ij}\theta^H_j,\\
     &= M_i\theta^H_i-\sum_jM_{ij}\theta^H_j 
    + \int_{\Omega} \phi_i(\hat{\theta}-\tilde{\theta})\diff x.
\end{align}}
This can be decomposed into elements to obtain
\revised{
  \begin{equation}
    M_i\theta^H_i = \sum_e\left(M_i^e\theta^L_i + f_i^e\right), \quad
    f_i^e = M_i^e\theta^H_i-\sum_jM_{ij}^e\theta^H_j 
    + \int_e \phi_i(\hat{\theta}-\tilde{\theta})\diff x,
  \end{equation}
  where
\begin{equation}
  M_i^e = \int_e \phi_i \diff x,  \quad \mbox{ and }
    M_{ij}^e = \int_e \phi_i\phi_j \diff x.
\end{equation}}
\revised{Importantly, the contributions $f^e_i$ of element $e$
  to its vertices sum to zero, since
\begin{align}
  \nonumber
  \sum_{i=1}^nf_i^e &=\sum_{i=1}^nM_i^e\theta^H_i-\sum_{i=1}^n\sum_{j=1}^nM_{ij}^e\theta_j^H + \int_e\underbrace{\sum_{i=1}^n\phi_i}_{=1}(\hat{\theta}-\tilde{\theta})\diff x, \\
  &= 
  \underbrace{\sum_{i=1}^nM_i^e\theta^H_i-\sum_{j=1}^nM_{j}^e\theta_j^H}_{=0}
   + \underbrace{\int_e(\hat{\theta}-\tilde{\theta})\diff x}_{=0}=0. 
\end{align}
It follows that the total mass of the solution remains unchanged
(i.e., $\sum_{i=1}^nM_i\theta_i^H=\sum_{i=1}^nM_i\theta_i^L$) if all
contributions of the same element are reduced by the same amount.  }

We can then choose element limiting constants $\alpha^e$
to get
\begin{equation}
M_i\theta^H_i = \sum_e\left(M_i^e\theta^L_i + \alpha_e f_i^e\right),
\end{equation}
where $0\leq\alpha_e\leq 1$ is a limiting constant for each element
which is chosen to satisfy {vertex bounds obtained from the nodal
values of $\hat{\theta}$.}

{
The bounds in each vertex are obtained as follows. First element
bounds $\theta_{\max}^e$ and $\theta_{\min}^e$ are obtained from 
$\hat{\theta}$ by maximising/minimising over the vertices 
of element $e$. Then for each vertex $i$, maxima/minima are
obtained by maximising/minimising over the elements
containing the vertex:
\begin{equation}
\theta_{\max,i}=\max_e\theta_{\max}^e,\quad
\theta_{\min,i}=\min_e\theta_{\min}^e.
\end{equation}
The correction factor $\alpha_e$ is chosen so as to enforce the
local inequality constraints
\begin{equation}
M_i^e\theta_{\min,i}\le M_i^e\theta_i^L+\alpha_ef_i^e\le
M_i^e\theta_{\max,i}
\end{equation}
Summing over all elements, one obtains the corresponding global estimate
\begin{equation}
M_i\theta_{\min,i}\le M_i\theta_i^L+\sum_e\alpha_ef_i^e\le
M_i\theta_{\max,i},
\end{equation}
which proves that the corrected value $\theta_i^C:=
\theta_i^L+\frac{1}{M_i}\sum_e\alpha_ef_i^e$ is bounded by
$\theta_{\max,i}$ and $\theta_{\min,i}$.}

To enforce the above maximum principles, we limit
the element contributions $f_i^e$ using 
\begin{equation}
\alpha_e=\min\limits_{i\in {\cal N}_e}
\left\{\begin{array}{ll}
\min\left\{1,\frac{M_i^e( \theta_{\max,i} -\theta^L_i)}{f_{i}^{e}}\right\} &
\mbox{if}\quad f_{i}^{e}>0,\\[0.1cm]
1 &\mbox{if} \quad f_{i}^{e}=0,\\[0.0cm]
\min\left\{1,\frac{M_i^e (\theta_{\min,i}- \theta^L_i)}{f_{i}^{e}}\right\} &
\mbox{if}\quad f_{i}^{e}<0.
\end{array}\right.
\end{equation}
This definition of $\alpha_e$ corresponds to a localised version of
the element-based multidimensional FCT limiter
(\citep{lohner1987finite,fctools}) and has the same structure as
formula (\ref{dglim}) for the correction factors that we used to
constrain the $DG_1$ approximation.  \revised{A further advantage of
  the localised formulation is that the limited fluxes can be built
  independently in each element, before assembling globally and
  dividing by the global lumped mass by iterating over nodes.}

\section{Numerical Experiments}
\label{sec:numerics}

\begin{figure}
  \centerline{\includegraphics[width=14cm]{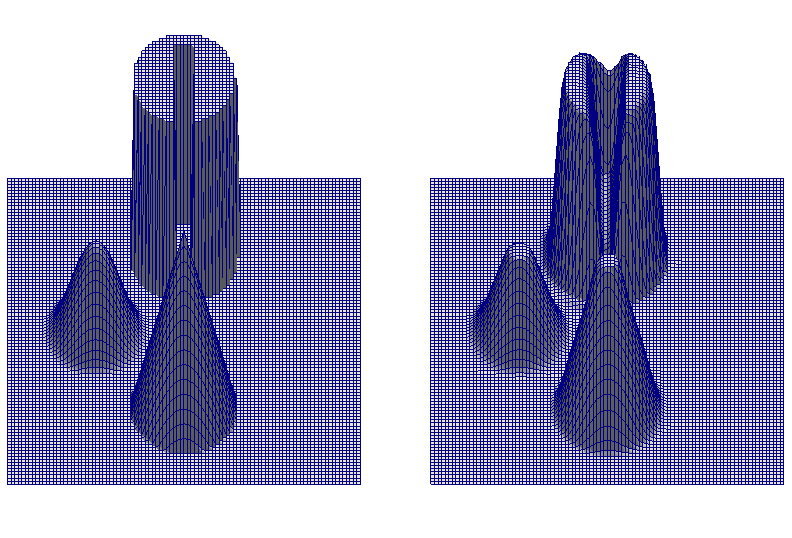}}
  \caption{\label{fig:solid} Solid body rotation test (Section
  \ref{sec:solid}) on a 100$\times$100 grid. Solution is interpolated
  to a $DG_1$ discontinuous field before plotting. Left: initial
  condition. Right: Solution after one rotation. The solution is free
  from over- and undershoots and exhibits comparable numerical
  dissipation to discontinuous Galerkin methods combined with
  limiters.}
\end{figure}

In this section, we provide some numerical experiments demonstrating the
localised limiter for embedded Discontinuous Galerkin schemes.

\subsection{Solid body rotation}
\label{sec:solid}
In this standard test case, the transport equations are solved in the
unit square $\Omega=(0,1)^2$ with velocity field
$\MM{u}(x,y)=(0.5-y,x-0.5)$, \emph{i.e.} a solid body rotation in
anticlockwise direction about the centre of the domain, so that the
exact solution at time $t=2\pi$ is equal to the initial condition. The
initial condition is chosen to be the standard hump-cone-slotted
cylinder configuration defined in \citet{leveque96:highres}, and
solved on a regular mesh with element width $h=1/100$ \revised{and Courant number 0.3}. The result,
shown in Figure \ref{fig:solid}, is comparable with the result for the
\revised{$DG_1$} discontinuous Galerkin vertex-based limiter shown in
Figure 2 of \citet{kuzmin2010vertex}; it is free from over- and
undershoots and exhibits a similar amount of numerical
diffusion. \revised{It is also hard to distinguish between the
  $x$-direction, where the finite element space is discontinuous, and
  the $y$-direction, where the space is continuous.  This suggests
  that we have achieved our goal of constructing a limited transport
  scheme for our partially-continuous finite element space.}

\revised{\subsection{Advection of a discontinous function with curvature}
\label{sec:curvybump}
In this test case, the transport equations are solved in the unit
square $\Omega=(0,1)^2$ with velocity field $\MM{u}=(1,0)$,
\emph{i.e.}  steady translation in the $x$-direction (which is the
direction of discontinuity in the finite element space). The initial
condition is
\begin{equation}
\theta = \left\{
\begin{array}{cc}
4y(1-y) + 1 & \mbox{ if } 0.2<x<0.4, \\
4y(1-y) & \mbox{ otherwise. }
\end{array}\right.
\end{equation}
This test case is challenging because the height of the ``plateau''
next to the continuity varies as a function of $y$ (\emph{i.e.}, in
the direction tangential to the discontinuity); this means that the
behaviour of the limiter is more sensitive to the process of obtaining
local bounds.

The equations are integrated until $t=0.4$ in a $100\times100$ square
grid and Courant number 0.3. The results are showing in Figure
\ref{fig:curvybump}. One can see qualitatively that the degradation in
the solution due to the limiter and numerical errors is not too great.
}
\begin{figure}
\centerline{\includegraphics[width=8cm]{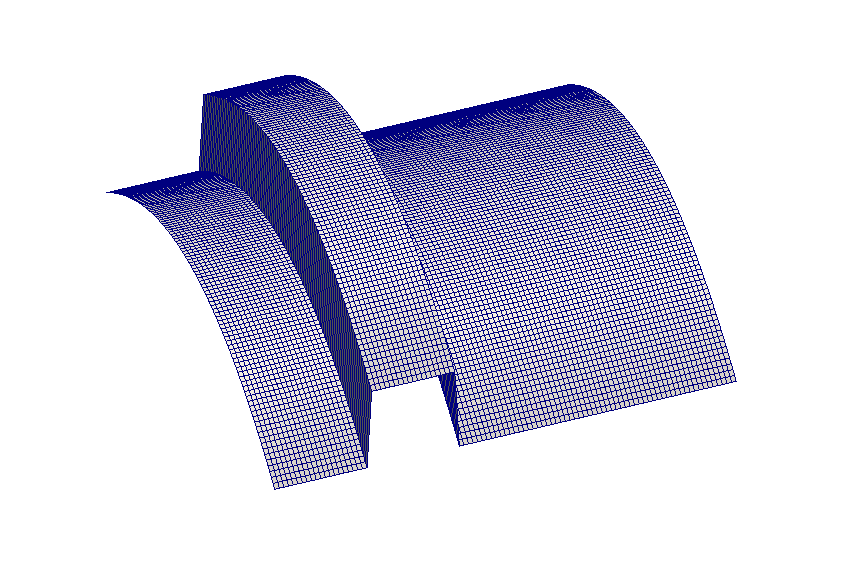}
\includegraphics[width=8cm]{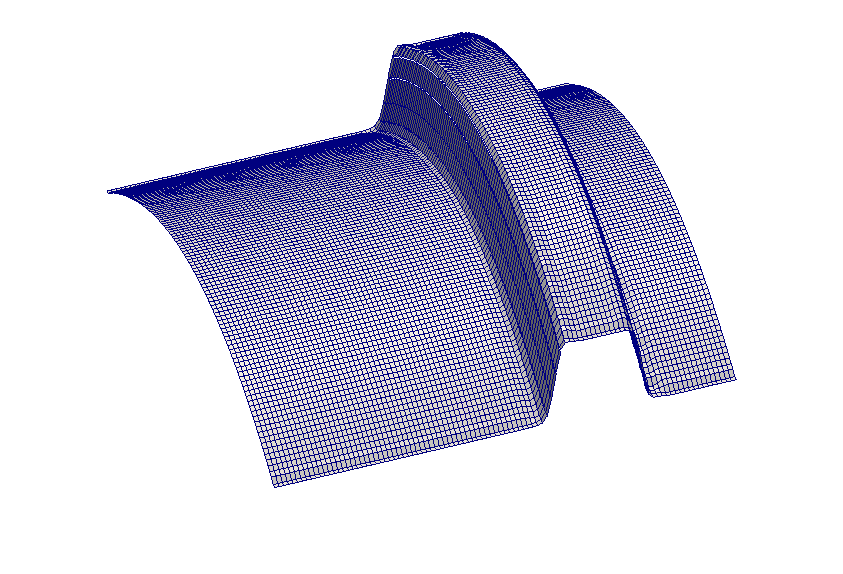}}
\revised{\caption{\label{fig:curvybump}Results from the test case in Section
  \ref{sec:curvybump}. {\bfseries Left:} The initial condition. {\bfseries Right:} The solution at time $t=0.4$.}}
\end{figure}

\subsection{Convergence test: deformational flow}
\label{sec:convergence}
In this test, we consider the advection of a smooth function by a
deformational flow field that is reversed so that the function at time
$t=1$ is equal to the initial condition. As is standard for this type
of test, we add a translational component to the flow and solve the
problem with periodic boundary conditions to eliminate the possibility
of fortuitous error cancellation due to the time reversal.

The transport equations are solved in a unit square, with periodic boundary 
conditions in the $x$-direction. The initial condition is
\[
\theta(\MM{x},0) = 0.25(1+\cos(r)), 
\quad r = \min\left(0.2,\sqrt{(x-0.3)^2 + (y-0.5)^2}/0.2\right),
\]
and the velocity field is
\[
\MM{u}(\MM{x},t) = \left(1-5(0.5-t)\sin(2\pi(x-t))\cos(\pi y),
5(0.5-t)\cos(2\pi(x-t))\sin(\pi y)\right),
\]
\revised{where $\MM{x}=(x,y)$}. The problem was solved on a sequence
of regular meshes with square elements \revised{at fixed timestep $\Delta t
=0.000856898$}, and the $L^2$ error was computed. A plot of the errors
is provided in Figure \ref{fig:convergence}. As expected, we obtain
second-order convergence (the quadratic space in the vertical does not
enhance convergence rate because the full two-dimensional quadratic
space is not spanned).

\begin{figure}
  \centerline{\includegraphics[width=12cm]{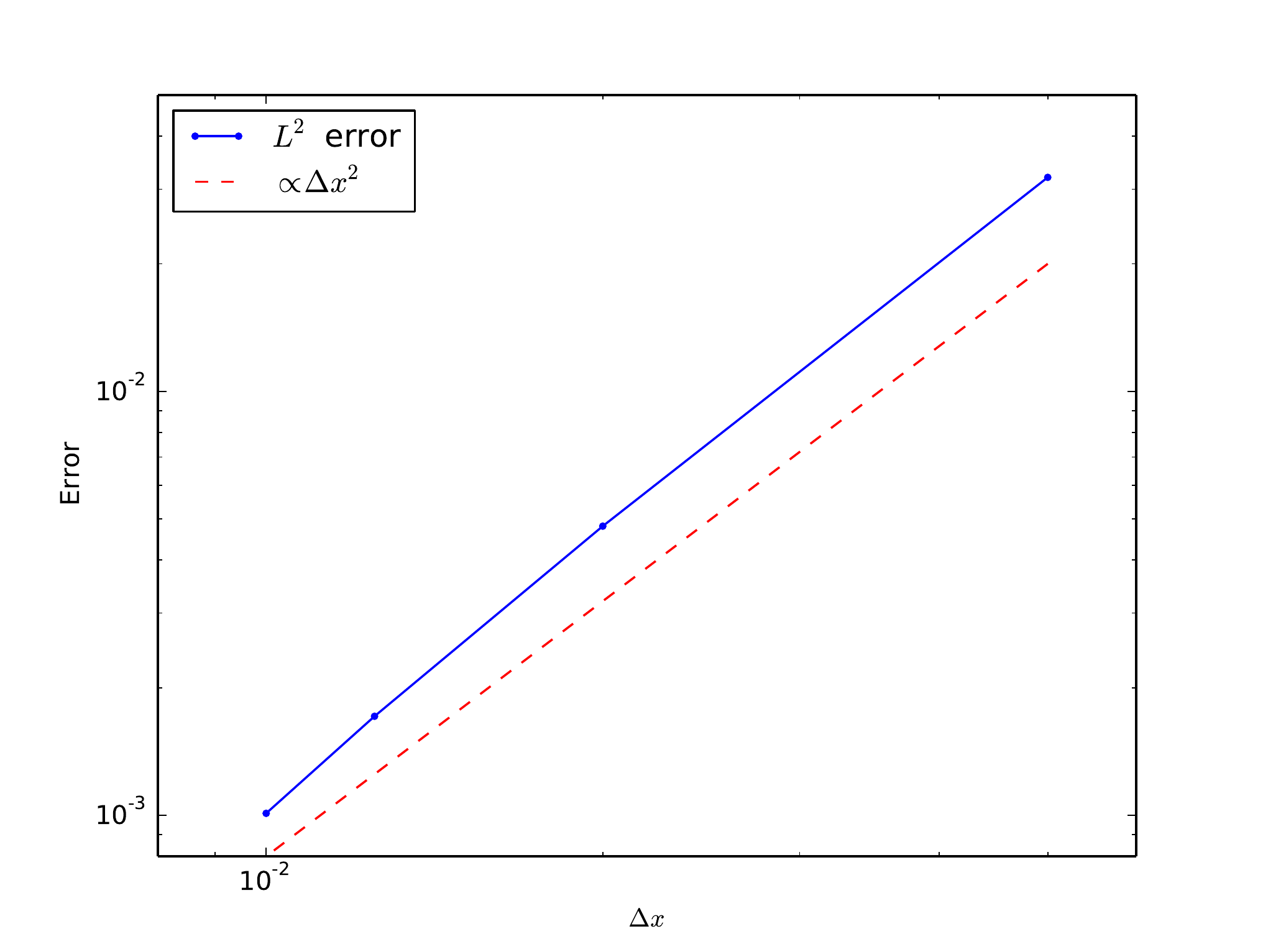}}

  \centerline{\begin{tabular}{|c|c|}
    \hline
    $\Delta x$ & $L^2$ error \\
    \hline
    0.05 & 0.0319911 \\
    0.02 &   0.0048104 \\
    0.0125 & 0.0017125 \\
    0.01 &   0.0010108 \\
    \hline
  \end{tabular}}
\caption{\label{fig:convergence}Convergence plot for deformational
  flow experiment (Section \ref{sec:convergence}) showing second order
  convergence, and a table of error values.}
\end{figure}

\section{Summary and Outlook}
\label{sec:outlook}
In this paper we described a limited transport scheme for
partially-continuous finite element spaces. Motivated by numerical
weather prediction applications, where the finite element space for
temperature and other tracers is imposed by hydrostatic balance and
wave propagation properties, we focussed particularly on the case of
tensor-product elements that are continuous in the vertical direction
but discontinuous in the horizontal. However, the entire methodology
applies to standard \revised{$C_0$} finite element spaces. The
transport scheme was demonstrated in terms of convergence rate on
smooth solutions and dissipative behaviour for non-smooth solutions in
some standard testcases.

Having a bounded transport scheme for tracers is a strong requirement
for numerical weather prediction algorithms; the development of our
scheme advances the practical usage of the compatible finite element
methods described in the introduction. The performance of this
transport scheme applied to temperature in a fully coupled atmosphere
model will be evaluated in 2D and 3D testcases as part of the ``Gung
Ho'' UK Dynamical Core project in collaboration with the Met Office.
In the case of triangular prism elements we anticipate that it may be
necessary to modify the algorithm above to limit the time derivatives
as described in \citet{kuzmin2013slope}.

A key novel aspect of our transport scheme is the localised
element-based FCT limiter. This limiter has much broader potential
for use in FCT schemes for continuous finite element spaces, which
will be explored and developed in future work.

\section{Acknowledgements}
Colin Cotter acknowledges funding from NERC grant NE/K006789/1. Dmitri
Kuzmin acknowledges funding from DFG grant KU 1530/12-1.

\bibliographystyle{elsarticle-harv}
\bibliography{paper}

\end{document}